\newcommand*{\defeq}{\mathrel{\rlap{%
                     \raisebox{0.3ex}{$\m@th\cdot$}}%
                     \raisebox{-0.3ex}{$\m@th\cdot$}}%
                     =}
\newtheorem{theorem}{Theorem}[section]
\newtheorem{corollary}[theorem]{Corollary}
\newtheorem{example}[theorem]{Example}
\begin{document}

\nocite{*}

\title{A weak criterion of bigness for toric vector bundles}

\author{Evgeny Mayanskiy}

\maketitle

\begin{abstract}
  We prove a weak version of a bigness criterion for equivariant vector bundles on toric varieties. 
\end{abstract}

\section{Preliminaries}

Let $k$ be an algebraically closed field of characteristic $0$, $T=(k^{*})^{r}$ a torus, $M=M(T)\cong {\mathbb Z}^{r}$ and $N={M(T)}^{\vee}$ the character and cocharacter lattices with the duality pairing $(m,n)\defeq n(m)\in \mathbb Z$, $m\in M$, $n\in N$.\\

We use notation from \cite{Fulton}. Let $\Delta$ be a fan in $N$ such that the associated toric variety $X=X(\Delta)$ is projective. Let $P$ be the set of rays of $\Delta$ and $v_{\rho}\in N$, $\rho \in P$, their generators, i.e. $\lvert{\rho}\rvert \cap N = {\mathbb Z}_{\geq 0} \cdot v_{\rho}$. For any cone $\sigma \in \Delta$, let $T_{\sigma}\subset T$ denote the stabilizer of a point of the $T$-orbit in $X$ corresponding to $\sigma$. Note that $M(T_{\sigma}) = M/({\sigma}^{\perp}\cap M)$. We assume that all vector spaces and bundles have finite rank.\\

According to Klyachko \cite{Klyachko}, a $T$-equivariant vector bundle $\mathcal E$ on $X$, customarily called 'a toric vector bundle', is determined, up to an isomorphism, by a collection of complete vector space $\mathbb Z$-filtrations
$$
{\mathcal F}_{\rho} (\mathcal E) = \{ E \supseteq \cdots \supseteq {\mathcal E}^{\rho}(j) \supseteq {\mathcal E}^{\rho}(j+1) \supseteq \cdots \supseteq (0)  \}, \; \rho\in P, 
$$
of the fiber $E={\mathcal E}_{o}$ over $o=(1,\ldots ,1)\in T\subset X$ satisfying the following compatibility condition:
\begin{center}
\begin{itshape}
for any cone $\sigma \in \Delta$, there is a grading $E = \bigoplus_{\substack{\chi \in M(T_{\sigma})}} E_{\chi}^{\sigma}$ such that for any $\rho\in P$, $\rho\prec \sigma$, $j\in \mathbb Z$
\begin{equation*}
{\mathcal E}^{\rho}(j) = \bigoplus_{\substack{\chi \in M(T_{\sigma}) \\ (\chi , v_{\rho})\geq j}} E_{\chi}^{\sigma}.
\end{equation*}
\end{itshape}
\end{center}

One can similarly define filtrations of $E$ for any $n \in N$:
$$
{\mathcal E}^{n}(j) = \bigoplus_{\substack{\chi \in M(T_{\sigma}) \\ (\chi , n)\geq j}} E_{\chi}^{\sigma}\;\; \text{if}\;\; n\in \lvert\sigma\rvert .
$$
Given a toric vector bundle $\mathcal E$ on $X$ and $e\in E \setminus \{ 0 \}$, one can associate with them
\begin{itemize}
\item a piecewise linear function ${\phi}_e\colon N\otimes_{\mathbb Z}{\mathbb Q} \rightarrow {\mathbb Q}$ defined by
$$
{\phi}_e (n) = \max\limits_{j} \{ j \mid e\in {\mathcal E}^{n}(j) \}, \; n\in N,\;\text{and}
$$
\item a convex polytope ${\Delta}_e\subset M\otimes_{\mathbb Z}{\mathbb Q}$ defined by the function ${\phi}_e$:
$$
{\Delta}_e = \{ u \in M\otimes_{\mathbb Z}{\mathbb Q} \mid (u,v_{\rho}) \leq {\phi}_e (v_{\rho}),\; \rho\in P \} .
$$
\end{itemize}

Note that 
$$
H^0(X,\mathcal E)=\bigoplus_{u\in M} \operatorname*{Span} \{ {{\chi}^{-u}}\otimes e \mid u\in {\Delta}_e \} \subset k[M]\otimes_{k}{E}\quad \cite{Klyachko}.
$$

Moreover, there is a \textit{finite} subset $\epsilon (X,\mathcal E)\subset E \setminus \{ 0 \}$ such that 
$$
H^0(X,\mathcal E)=\operatorname*{Span}_{e\in \epsilon (X,\mathcal E)} \{ {{\chi}^{-u}}\otimes e \mid u\in {\Delta}_e\cap M \}.
$$
One may take $\epsilon (X,\mathcal E)$ to be the ground set of the matroid constructed in \cite{DiRocco}. Let $\overline{\epsilon}(X,\mathcal E)=\{ e\in {\epsilon}(X,\mathcal E) \mid {\Delta}_e\cap M \neq \varnothing \} .$\\

The vector bundle $\mathcal E$ is big if and only if there is an integer $p > 0$ such that
$$
\limsup\limits_{l \rightarrow \infty} \frac{\dim \operatorname{Im} (S^{l}H^{0}(X,\operatorname{Sym}^{p}{\mathcal E}) \rightarrow H^{0}(X,\operatorname{Sym}^{pl}{\mathcal E}))}{(pl)^d/d!} > 0,
$$
i.e.
\begin{equation}\label{S}
\limsup\limits_{l \rightarrow \infty} \left( \frac{1}{l^{d}} \cdot \dim \operatorname*{Span}_{e_1,\ldots , e_l\in \epsilon (X,\operatorname{Sym}^{p}{\mathcal E})} \{ {\chi}^{-(u_1 + \ldots + u_l)}\otimes (e_1 \cdots e_l) \mid u_i\in {\Delta}_{e_i}\cap M,\; i=1,\ldots , l \} \right) > 0.
\end{equation}
Here $d=\dim X + \operatorname{rk} {\mathcal E} - 1$. A convenient reference is \cite{Lazarsfeld}, Theorem~$3.3$, or work of Khovanskii.\\

If $p > 0$, then the vector bundle $\operatorname{Sym}^{p}{\mathcal E}$, the fiber of which over $o$ is $\operatorname{Sym}^{p}{E}$,  corresponds to the following filtrations:
$$
{\left( \operatorname{Sym}^{p}{\mathcal E} \right)}^{\rho}(j) = \sum_{j_1 + \ldots + j_p = j} {\mathcal E}^{\rho}(j_1)\cdot \ldots \cdot {\mathcal E}^{\rho}(j_p) \subset \operatorname{Sym}^{p}{E}, \; j\in\mathbb Z ,\; \rho\in P.
$$

This implies that if $f={f}^{c_1}_1\ldots {f}^{c_q}_q\in \operatorname{Sym}^{\sum_{i} c_ip_i}{E}$, $f_i\in \operatorname{Sym}^{p_i}{E}$, $c_i\geq 0$, $i=1,\ldots ,q$, then
$$
c_1\cdot {\Delta}_{f_1} + \ldots + c_q\cdot {\Delta}_{f_q} \subset {\Delta}_f, 
$$
where $+$ denotes Minkowski addition.

\section{The main result}

Given a polytope $\pi\subset {\mathbb Q}^r$, let $L\pi = \operatorname*{Span} \{ a - b \mid a\in \pi, b\in \pi \} \subset {\mathbb Q}^r$ be the vector subspace spanned by the differences of points of $\pi$. Similarly, the toric vector bundle $\mathcal E$ on $X$ defines a vector subspace
$$
L(X,\mathcal E) = \operatorname*{Span} \{ a - b \mid a, b\in {\Delta}_f ,\; f\in \operatorname{Sym}^{p}{E} \setminus \{ 0 \} ,\; p > 0 \} \subset M\otimes_{\mathbb Z}{\mathbb Q}.
$$
If $w\colon M\otimes_{\mathbb Z}{\mathbb Q} \rightarrow M\otimes_{\mathbb Z}{\mathbb Q} / L(X,\mathcal E)$ is the natural linear projection, then we define a subsemigroup
$$
\Lambda = \{ w({\Delta}_f) \mid f\in \operatorname{Sym}^{p}{E} \setminus \{ 0 \} ,\; p > 0 ,\; {\Delta}_f \neq \varnothing \}  \subset M\otimes_{\mathbb Z}{\mathbb Q} / L(X,\mathcal E).
$$

Note that, unless $\mathcal E = 0$, $L(X,\mathcal E) = L{\Delta}_e$ for some $ e\in \operatorname{Sym}^{a}{E} \setminus \{ 0 \} $, $a > 0$. Moreover, one can then find such $e$ in the ground set of the matroid constructed in \cite{DiRocco} for $\operatorname{Sym}^{a}{\mathcal E}$.\\

For any $p > 0$, consider the finite set $W_p \subset M\otimes_{\mathbb Z}{\mathbb Q} / L(X,\mathcal E)$ of points
$$
w_f = w({\Delta}_{f}), \; f\in \overline{\epsilon} (X,\operatorname{Sym}^{p}{\mathcal E}) .
$$
Let ${\Lambda}_p \subset \Lambda\times\mathbb Z$ be the subsemigroup generated by $W_p\times \{ 1 \}$ and ${\Delta}^{\perp}(p,X,\mathcal E) \subset M\otimes_{\mathbb Z}{\mathbb Q} / L(X,\mathcal E)$ be the convex hull of $W_p$. Consider the following algebra graded by ${\Lambda}_p$ and generated by $\overline{\epsilon} (X,\operatorname{Sym}^{p}{\mathcal E})=\{ f_1,\ldots ,f_q \}$:
$$
{\mathcal A}(p,X,\mathcal E)=\bigoplus_{\overline{w}\in{\Lambda}_p} {\mathcal A}_{\overline{w}}(p,X,\mathcal E),
$$
$$
{\mathcal A}_{(w,l)}(p,X,\mathcal E)=\operatorname*{Span} \{ {f}^{c_1}_1\ldots {f}^{c_q}_q \mid c_i\geq 0,\; c_1\cdot w_{f_1} + \ldots + c_q\cdot w_{f_q} = w,\; c_1 + \ldots + c_q= l \} \subset \operatorname{Sym}^{pl}{E}.
$$
Multiplication in ${\mathcal A}(p,X,\mathcal E)$ is inherited from the symmetric algebra $\operatorname{Sym}{E}$. Let
$$
{\alpha}(p,X,\mathcal E)=\limsup\limits_{l\rightarrow \infty} \sum_{w\in \Lambda , \; (w,l)\in {\Lambda}_p}\dim {\mathcal A}_{(w,l)}(p,X,\mathcal E) / l^{\dim X - \dim L(X,\mathcal E)+\operatorname{rk}{\mathcal E} - 1}\in\mathbb R.
$$

Assume that the subset
$$
\bigcup_{p>0} {\epsilon}(X,\operatorname{Sym}^{p}{\mathcal E}) \subset \bigoplus_{p\geq 0}\operatorname{Sym}^{p}{E} = \operatorname{Sym}{E}
$$
of the symmetric algebra is closed under multiplication and contains the ground sets of the matroids constructed in \cite{DiRocco} for $\operatorname{Sym}^{p}{\mathcal E}$, $p>0$.\\

The main result of this note is the following criterion of bigness.

\begin{theorem}\label{theorem:main}
Let $X=X(\Delta)$ be a projective toric variety, $\mathcal E$ a toric vector bundle on $X$. Then the following are equivalent:
\begin{enumerate}
\item $\mathcal E$ is big,
\item there is an integer $p > 0$ such that ${\alpha}(p,X,\mathcal E) > 0$.
\end{enumerate}
\end{theorem}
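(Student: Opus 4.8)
\emph{Strategy and setup.} The idea is to express the dimension inside \eqref{S} through a $w$-weighted decomposition and to trap it between constant multiples of $l^{\,\dim L(X,\mathcal E)}\cdot\sum_{\bar u}\dim\mathcal A_{(\bar u,l)}(p,X,\mathcal E)$. For $p>0$, $l\geq0$, $u\in M$ let $V^{u}_{l}\subset\operatorname{Sym}^{pl}E$ be the span of the products $e_1\cdots e_l$ with $e_i\in\epsilon(X,\operatorname{Sym}^{p}\mathcal E)$ admitting $u_i\in\Delta_{e_i}\cap M$ with $u_1+\dots+u_l=u$. Decomposing $k[M]\otimes_k\operatorname{Sym}^{pl}E$ by the $M$-grading of $k[M]$, the dimension appearing in \eqref{S} equals $\sum_{u\in M}\dim V^{u}_{l}$, so $\mathcal E$ is big iff $\limsup_{l\to\infty}\big(\sum_u\dim V^{u}_{l}\big)/l^{d}>0$ for some $p$ (here $d=\dim X+\operatorname{rk}\mathcal E-1$). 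I will use two facts. (a) A generating product of $V^{u}_{l}$ uses only $e_i\in\overline\epsilon(X,\operatorname{Sym}^{p}\mathcal E)$ (one needs $\Delta_{e_i}\cap M\neq\varnothing$) and forces $w(u_i)=w(\Delta_{e_i})$; hence, with $\bar u=w(u)$, one has $V^{u}_{l}\subset\mathcal A_{(\bar u,l)}(p,X,\mathcal E)$ (indeed $\mathcal A_{(\bar u,l)}(p,X,\mathcal E)=\sum_{w(u')=\bar u}V^{u'}_{l}$). (b) For a fixed $p$ the finitely many polytopes $\Delta_e$, $e\in\epsilon(X,\operatorname{Sym}^{p}\mathcal E)$, lie in one ball, so every $u$ with $V^{u}_{l}\neq0$ lies in a ball of radius $O(l)$; hence a single fibre of $w$ contains at most $C_p\,l^{\,\dim L(X,\mathcal E)}$ such $u$.

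\emph{Proof of $(1)\Rightarrow(2)$.} By (a) and (b),
\[
\sum_u\dim V^{u}_{l}=\sum_{\bar u}\sum_{w(u)=\bar u}\dim V^{u}_{l}\ \leq\ C_p\,l^{\,\dim L(X,\mathcal E)}\sum_{(\bar u,l)\in\Lambda_p}\dim\mathcal A_{(\bar u,l)}(p,X,\mathcal E),
\]
and since $d-\dim L(X,\mathcal E)=\dim X-\dim L(X,\mathcal E)+\operatorname{rk}\mathcal E-1$ is exactly the exponent normalising $\alpha$, dividing by $l^{d}$ and passing to $\limsup$ gives $\limsup_{l\to\infty}\big(\sum_u\dim V^{u}_{l}\big)/l^{d}\leq C_p\cdot\alpha(p,X,\mathcal E)$. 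So if $\mathcal E$ is big, the integer $p$ realising \eqref{S} satisfies $\alpha(p,X,\mathcal E)>0$.

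\emph{Proof of $(2)\Rightarrow(1)$.} By the Remark choose $e_0\in\operatorname{Sym}^{a_0}E\setminus\{0\}$ with $L\Delta_{e_0}=L(X,\mathcal E)$; passing to a high power $e\defeq e_0^{\,j}\in\operatorname{Sym}^{a}E$ ($a=ja_0$) I may also assume $\operatorname{conv}(\Delta_e\cap M)$ has dimension $\dim L(X,\mathcal E)$, so the $m$-fold sumset of $\Delta_e\cap M$ has $\geq c'\,m^{\,\dim L(X,\mathcal E)}$ elements, $c'>0$, for $m\gg0$. Assume $\alpha(p,X,\mathcal E)>0$. For each $l$ and each $\bar u$ with $\mathcal A_{(\bar u,l)}(p,X,\mathcal E)\neq0$ fix a monomial basis $\{\mu\}$, $\mu=f_1^{c_1}\cdots f_q^{c_q}$, and set $v_0(\mu)=\sum_j c_j v_j^{\ast}$ for once-chosen $v_j^{\ast}\in\Delta_{f_j}\cap M$ (which exist, as $f_j\in\overline\epsilon(X,\operatorname{Sym}^{p}\mathcal E)$). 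For $m\geq1$ and $u'_1,\dots,u'_m\in\Delta_e\cap M$,
\[
\big(\chi^{-v_0(\mu)}\otimes\mu\big)\cdot\prod_{t=1}^{m}\big(\chi^{-u'_t}\otimes e\big)\ =\ \chi^{-v}\otimes(\mu\,e^{m})\ \in\ H^{0}(X,\operatorname{Sym}^{pl+am}\mathcal E),\qquad v=v_0(\mu)+\textstyle\sum_t u'_t .
\]
Here $w(v)=\bar u+m\,w(\Delta_e)$, so $\bar u$ is recovered from the weight $v$; the vectors $\mu\,e^{m}$ belonging to one $\bar u$ stay linearly independent because $\operatorname{Sym}E$ is a domain and multiplication by $e^{m}$ is injective; and for fixed $\mu$ the weight $v$ assumes $\geq c'\,m^{\,\dim L(X,\mathcal E)}$ values. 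Grouping these sections by weight gives $h^{0}(X,\operatorname{Sym}^{pl+am}\mathcal E)\geq c'\,m^{\,\dim L(X,\mathcal E)}\sum_{\bar u}\dim\mathcal A_{(\bar u,l)}(p,X,\mathcal E)$. Choosing $l_k\to\infty$ and $\delta>0$ with $\sum_{\bar u}\dim\mathcal A_{(\bar u,l_k)}(p,X,\mathcal E)\geq\delta\,l_k^{\,\dim X-\dim L(X,\mathcal E)+\operatorname{rk}\mathcal E-1}$ (possible since $\alpha(p,X,\mathcal E)>0$), then $m=l_k$ and $N_k=(p+a)l_k$, yields $h^{0}(X,\operatorname{Sym}^{N_k}\mathcal E)\geq c'\delta\,l_k^{\,d}=c'\delta(p+a)^{-d}N_k^{\,d}$. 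Hence $\limsup_{N\to\infty}h^{0}(X,\operatorname{Sym}^{N}\mathcal E)/N^{d}>0$; since $h^{0}(X,\operatorname{Sym}^{N}\mathcal E)=h^{0}(\mathbb P(\mathcal E),\mathcal O(N))$, this is $\tfrac1{d!}\operatorname{vol}(\mathcal O_{\mathbb P(\mathcal E)}(1))>0$, i.e. $\mathcal E$ is big. (The case $\mathcal E=0$ is trivial, both statements failing.)

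\emph{Main obstacle.} The only delicate step is the lower bound in $(2)\Rightarrow(1)$: one must certify that the multiplications by $e^{m}$ genuinely produce $\asymp l^{\,\dim L(X,\mathcal E)}$ new, independent sections for each unit of $\dim\mathcal A_{(\bar u,l)}$, with no cancellation among different monomials $\mu$ or among different $\bar u$. This is precisely the role played by the $w$-weight (it separates the $\bar u$'s), by the integral domain $\operatorname{Sym}E$ (it keeps the $\mu$'s independent), and by the replacement of $e_0$ by a power (so that $\Delta_e\cap M$ affinely spans $L(X,\mathcal E)$). Conceptually, $L(X,\mathcal E)$ is exactly the span of the ``free'' polytope directions — already realised by one section $e$ — so collapsing it costs precisely the factor $l^{\,\dim L(X,\mathcal E)}$ that separates \eqref{S} from $\alpha(p,X,\mathcal E)$.
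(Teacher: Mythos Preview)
Your proof is correct and, for $(1)\Rightarrow(2)$, essentially identical to the paper's: both decompose the span in \eqref{S} by $M$-weight, bound the number of contributing $u$ in each fibre of $w$ by $O(l^{\dim L(X,\mathcal E)})$, and identify the remaining factor with $\alpha(p,X,\mathcal E)$. For $(2)\Rightarrow(1)$ the two arguments differ in the choice of multiplier. The paper pads each monomial $f_1^{c_1}\cdots f_q^{c_q}$ by $(f_1\cdots f_q)^{lN}$ and shows, via a convex-geometric argument about perturbed Minkowski sums, that all the resulting polytopes $\Delta_f$ with fixed $(w,l)\in\Lambda_p$ contain a \emph{common} ball of radius $\asymp l$ inside a translate of $L(X,\mathcal E)$; this yields $\geq C\,l^{\dim L(X,\mathcal E)}$ lattice weights working simultaneously for every generator of $\mathcal A_{(w,l)}$. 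You instead fix a single $e$ with $L\Delta_e=L(X,\mathcal E)$ and enough lattice points, multiply each basis monomial $\mu$ by $e^m$, and count pairs $(\mu,v)$ directly, using the $w$-grading to separate the various $\bar u$ and injectivity of multiplication by $e^m$ in the domain $\operatorname{Sym}E$ to keep the $\mu$'s independent. Your route is a bit more elementary---no intersection-of-polytopes step---but leaves the generating set $\overline\epsilon(X,\operatorname{Sym}^p\mathcal E)$ and concludes via the volume criterion $h^0(X,\operatorname{Sym}^N\mathcal E)\gg N^d$; the paper's multiplier has the minor advantage that its sections are visibly products of the $f_j$, hence already lie in the image of $S^{l'}H^0(X,\operatorname{Sym}^p\mathcal E)$.
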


\begin{proof}

We may assume that $\dim X > 0$.\\

If $\mathcal E$ is big, then there is an integer $p>0$ such that \eqref{S} holds. If $\overline{\epsilon}(X,\operatorname{Sym}^{p}{\mathcal E}) = \{ f_1,\ldots ,f_q \}$, then \eqref{S} implies that
$$
0 < \limsup\limits_{l\rightarrow \infty} \left( \frac{1}{l^{d}} \cdot \dim \operatorname*{Span}_{\substack{c_1+\ldots +c_q = l \\ c_i \geq 0}} \{ {\chi}^{-u}\otimes {f^{c_1}_{1}\cdots f^{c_q}_{q}} \mid u \in \left( c_1 \cdot {\Delta}_{f_1} + \ldots + c_q \cdot {\Delta}_{f_q}\right)\cap M \} \right).
$$

This expression is bounded from above by
$$
\left( \limsup\limits_{l\rightarrow \infty} \frac{\#\left( M \cap l\cdot \left( {{\Delta}_{f_1}}+ \ldots + {{\Delta}_{f_q}} \right) \right)}{l^{\dim L(X,\mathcal E)}} \right) \cdot {\alpha}(p,X,\mathcal E),
$$
and so ${\alpha}(p,X,\mathcal E) > 0$.\\

Suppose that there is an integer $p>0$ such that ${\alpha}(p,X,\mathcal E) > 0$. We may assume that $L(X,\mathcal E) = L{\Delta}_e$ for some $e\in \overline{\epsilon}(X,\operatorname{Sym}^{p}{\mathcal E}) = \{ f_1,\ldots , f_q \}$. Choose an integer $N > 0$ large enough so that
$$
\bigcap_{\substack{c_1,\ldots , c_q\in\mathbb R \\ \max_i \lvert c_i \rvert \leq 1/N}} (1+c_1)\cdot \tilde{{\Delta}}_{f_1} + \ldots + (1+c_q)\cdot \tilde{{\Delta}}_{f_q}
$$
has a nonempty interior in $L(X,\mathcal E)$, where $\tilde{{\Delta}}_{f_i}$ denotes the orthogonal projection of ${\Delta}_{f_i}$ onto $L(X,\mathcal E)$ with respect to some positive definite integral symmetric bilinear form on $M$. In particular, every polytope $(1+c_1)\cdot \tilde{{\Delta}}_{f_1} + \ldots + (1+c_q)\cdot \tilde{{\Delta}}_{f_q}$, where $\max_i \lvert c_i \rvert \leq 1/N$, contains a fixed ball of dimension $\dim L(X,\mathcal E)$ and radius $s > 0$. Thus, if $f=(f_1\ldots f_q)^{lN}\cdot f^{c_1}_1\ldots f^{c_q}_q$, where $c_i\geq 0$, $\sum_i c_i\cdot w_{f_i} = w$, $\sum_i c_i=l$, then ${\Delta}_f$ contains a ball $B_{w,l}$ of dimension $\dim L(X,\mathcal E)$ and radius $lN\cdot s$, which depends on $(w,l)\in {\Lambda}_p$ but not on particular numbers $c_i$. Hence
$$
\#\left( M\cap \bigcap_{\substack{f=(f_1\ldots f_q)^{lN}\cdot f^{c_1}_1\ldots f^{c_q}_q \\ c_i\geq 0, \;\sum_i c_i\cdot w_{f_i} = w, \;\sum_i c_i=l}} {\Delta}_f \right) \geq C\cdot l^{\dim L(X,\mathcal E)},
$$
where $C > 0$ does not depend on $(w,l)\in {\Lambda}_p$.\\

This implies that
$$
\limsup\limits_{l\rightarrow \infty} \frac{\dim H^{0}(X,\operatorname{Sym}^{p(Nql+l)}{\mathcal E})}{(Nql+l)^{d}} \geq
$$
$$
\geq \limsup\limits_{l\rightarrow \infty} \left( \frac{1}{((Nq+1)l)^{d}} \cdot \dim \operatorname*{Span}_{\substack{c_1+\ldots +c_q = l \\ c_i\geq 0}} \{ {\chi}^{-u}\otimes f \mid u \in {\Delta}_f\cap M,\; f=(f_1\ldots f_q)^{lN}\cdot f^{c_1}_1\ldots f^{c_q}_q \} \right)\geq
$$

$$
\geq \frac{C}{(Nq+1)^d} \cdot {\alpha}(p,X,\mathcal E) > 0,
$$
and so $\mathcal E$ is big by definition.

\end{proof}

\begin{corollary}
Suppose $\mathcal E$ is a toric vector bundle on a projective toric variety $X=X(\Delta)$. If there is an integer $a>0$ and $f\in \operatorname{Sym}^{a}{E}\setminus \{ 0 \}$ such that $\dim {\Delta}_f = \dim X$, then $\mathcal E$ is big.
\end{corollary}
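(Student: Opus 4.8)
The plan is to deduce this from Theorem~\ref{theorem:main}, by producing an integer $p>0$ with ${\alpha}(p,X,\mathcal E)>0$; we may assume $\dim X>0$ and $\mathcal E\neq 0$. The starting point is that $\dim{\Delta}_f=\dim X$ forces $L(X,\mathcal E)=M\otimes_{\mathbb Z}{\mathbb Q}$: since $X$ is complete, $\dim X=\operatorname{rk}N=\dim_{\mathbb Q}(M\otimes_{\mathbb Z}{\mathbb Q})$, while $L{\Delta}_f\subseteq L(X,\mathcal E)\subseteq M\otimes_{\mathbb Z}{\mathbb Q}$ and $\dim L{\Delta}_f=\dim{\Delta}_f$. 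Hence $M\otimes_{\mathbb Z}{\mathbb Q}/L(X,\mathcal E)=0$, for every $p$ the semigroup ${\Lambda}_p$ is generated by $(0,1)$, and
$$
{\alpha}(p,X,\mathcal E)=\limsup_{l\rightarrow\infty}\frac{1}{l^{\operatorname{rk}\mathcal E-1}}\dim\operatorname*{Span}\{ {f}^{c_1}_1\cdots {f}^{c_q}_q\mid c_i\geq 0,\ \textstyle\sum_i c_i=l \},\qquad \{f_1,\ldots,f_q\}=\overline{\epsilon}(X,\operatorname{Sym}^{p}\mathcal E).
$$
The span is the degree-$l$ component (so a subspace of $\operatorname{Sym}^{pl}E$) of the graded $k$-subalgebra $B_p\defeq k[f_1,\ldots,f_q]\subseteq\operatorname{Sym}E$, graded by number of factors; this is a standard graded domain, so its Hilbert function is eventually a polynomial of degree one less than the Krull dimension of $B_p$, with positive leading coefficient. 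Since $B_p\subseteq\operatorname{Sym}E$ always has Krull dimension $\leq\operatorname{rk}\mathcal E$, we get ${\alpha}(p,X,\mathcal E)>0$ if and only if $B_p$ has Krull dimension $\operatorname{rk}\mathcal E$. So it suffices to find $p>0$ with $k[\overline{\epsilon}(X,\operatorname{Sym}^{p}\mathcal E)]$ of full Krull dimension $\operatorname{rk}\mathcal E$.

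Constructing such a $p$ is the heart of the argument. First, replacing $f$ by a power $f^{b}$ — which by the Minkowski inclusion of Section~1 satisfies $b\cdot{\Delta}_f\subseteq{\Delta}_{f^b}$, hence still has full-dimensional polytope — I may assume ${\Delta}_f$ contains an interior lattice point $u_0\in M$; since $\langle u_0,v_{\rho}\rangle$ and ${\phi}_f(v_{\rho})$ are integers, this yields $\langle u_0,v_{\rho}\rangle\leq{\phi}_f(v_{\rho})-1$ for all $\rho\in P$. Next fix an ample toric divisor $H=\sum_{\rho}h_{\rho}D_{\rho}$ with all $h_{\rho}>0$ and an integer $k>0$ with $\mathcal E\otimes\mathcal O_X(kH)$ globally generated; evaluating global sections at $o$ then shows that the set of $e\in E$ whose polytope with respect to $\mathcal E\otimes\mathcal O_X(kH)$ meets $M$ spans $E$, so I can select a basis $e_1,\ldots,e_n$ of $E$ ($n=\operatorname{rk}\mathcal E$) inside this set, with corresponding lattice points $u_i$. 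Put $l'\geq k\max_{\rho}h_{\rho}$ and $p\defeq al'+1$. For each $i$, the filtration inequality $\phi_{f^{l'}e_i}(v_\rho)\ge l'\phi_f(v_\rho)+\phi_{e_i}(v_\rho)$ underlying the Minkowski inclusion, together with the fact that tensoring by $\mathcal O_X(kH)$ shifts filtration functions by $kh_{\rho}$ (so $\phi_{e_i}(v_\rho)\ge\langle u_i,v_\rho\rangle-kh_\rho$ when $\phi_{e_i}$ is computed in $\mathcal E$) and the inequality $\langle u_0,v_\rho\rangle\le\phi_f(v_\rho)-1$, shows that the lattice point $l'u_0+u_i$ lies in ${\Delta}_{f^{l'}e_i}$. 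By Klyachko's description of $H^0(X,\operatorname{Sym}^{p}\mathcal E)$ together with the defining property of $\epsilon$, any vector of $\operatorname{Sym}^{p}E$ whose polytope meets $M$ lies in $\operatorname*{Span}\overline{\epsilon}(X,\operatorname{Sym}^{p}\mathcal E)$; hence $f^{l'}e_1,\ldots,f^{l'}e_n\in\operatorname*{Span}\overline{\epsilon}(X,\operatorname{Sym}^{p}\mathcal E)$ and $k[f^{l'}e_1,\ldots,f^{l'}e_n]\subseteq B_p$.

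It then remains to check that $k[f^{l'}e_1,\ldots,f^{l'}e_n]$ has Krull dimension $n$, an elementary transcendence-degree computation: writing $\operatorname{Sym}E=k[y_1,\ldots,y_n]$ with $y_i=e_i$, the fraction field of this subring contains all ratios $y_i/y_j=(f^{l'}y_i)/(f^{l'}y_j)$, hence the subfield $k(y_2/y_1,\ldots,y_n/y_1)$ of transcendence degree $n-1$, and also the homogeneous element $f^{l'}y_1$ of positive degree $al'+1$, which is transcendental over that field of degree-$0$ rational functions by comparing degrees; so the transcendence degree is $\geq n$, forcing equality. Thus $B_p$ has Krull dimension $\operatorname{rk}\mathcal E$, so ${\alpha}(p,X,\mathcal E)>0$, and $\mathcal E$ is big by Theorem~\ref{theorem:main}. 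I expect the main obstacle to be the second paragraph: ensuring, by passing to a twist on which $\mathcal E$ is globally generated, that enough vectors $e$ produce nonempty lattice polytopes, and arranging that the unavoidable shrinking of those polytopes caused by removing the twist is compensated by the interior lattice point of ${\Delta}_f$ once $l'$ is taken large.
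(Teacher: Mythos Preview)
Your argument is correct and follows the paper's strategy: deduce $L(X,\mathcal E)=M\otimes_{\mathbb Z}\mathbb Q$ from $\dim\Delta_f=\dim X$, then exhibit $p$ with $\alpha(p,X,\mathcal E)>0$ by showing that multiplication by a high power of $f$ carries a spanning set into $\operatorname{Span}\overline{\epsilon}(X,\operatorname{Sym}^p\mathcal E)$ and hence into $B_p$. The paper's execution is considerably shorter: it skips your ample-twist detour (full-dimensionality of $\Delta_f$ alone already gives $M\cap\Delta_{f^b g}\neq\varnothing$ for every $g\in\epsilon(X,\operatorname{Sym}^a\mathcal E)$ once $b\gg 0$) and then invokes the standing hypothesis that $\epsilon(X,\operatorname{Sym}^a\mathcal E)$ contains a basis of $\operatorname{Sym}^a E$ to obtain $f^b\cdot\operatorname{Sym}^a E\subseteq\operatorname{Span}\overline{\epsilon}(X,\operatorname{Sym}^{a+ab}\mathcal E)$, from which $\alpha(a+ab,X,\mathcal E)>0$ follows by a direct dimension count without your Krull-dimension/transcendence argument.
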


\begin{proof}

If $b>0$ is large enough, then $M\cap {\Delta}_{f^b\cdot g} \neq \varnothing$ for any $g\in {\epsilon} (X,\operatorname{Sym}^{a}{\mathcal E})$. As the subset 
$$
\bigcup_{p>0} {\epsilon}(X,\operatorname{Sym}^{p}{\mathcal E}) \subset \operatorname{Sym}{E}
$$
contains a vector space basis of the symmetric algebra and $\dim L(X,\mathcal E) = \dim X$, ${\alpha}(a+ab,X,\mathcal E) > 0$.

\end{proof}

\begin{example}
Suppose $\mathcal E = \mathcal O (D_1) \oplus \cdots \oplus \mathcal O (D_t)$, $t>0$, is a \textit{split} toric vector bundle on a projective toric variety $X=X(\Delta)$. Then $\mathcal E$ is big if and only if $\dim L(X,\mathcal E) = \dim X$ if and only if there are nonnegative integers $a_1,\ldots , a_t$ such that $\mathcal O (a_1D_1+\ldots + a_tD_t)$ is big.
\end{example}

Indeed, let $\bigcup_{p>0} {\epsilon}(X,\operatorname{Sym}^{p}{\mathcal E}) \subset \operatorname{Sym}{E}$ be the set of all nonconstant monomials in $x_1,\ldots ,x_t$, where $x_1,\ldots , x_t\in E$ are generators of the splitting summands of $\mathcal E$. For any $p>0$, ${\mathcal A}(p,X,\mathcal E)$ is a subalgebra of the symmetric algebra $\operatorname{Sym}{E}$:
$$
\bigoplus_{w\in \Lambda , (w,l)\in {\Lambda}_p} {\mathcal A}_{(w,l)}(p,X,\mathcal E)\subset \operatorname{Sym}^{pl}{E},\;\; l>0.
$$
Hence ${\alpha}(p,X,\mathcal E) = 0$ unless $\dim L(X,\mathcal E) = \dim X$.\\

\section*{Acknowledgement}

The author is grateful to the School of Mathematics of Sun Yat-sen University for support and excellent working conditions.\\

\bibliographystyle{ams-plain}

\bibliography{BigToricVB}

\end{document}